\documentclass[12pt]{amsart}
\usepackage[utf8]{inputenc}
\usepackage[margin=1in]{geometry}
\usepackage{indentfirst}
\usepackage{amssymb}
\usepackage{amsmath} 
\usepackage{graphicx}
\usepackage{graphics}
\usepackage{xcolor}
\usepackage{tikz}

\theoremstyle{plain}

\newtheorem{theorem}{Theorem}

\title{Minimal Posets with Prescribed Maximal Chain Cardinalities}
\author{Todd Bichoupan}
\date{May 26, 2023}

\begin{document}

\begin{abstract}
Given a nonempty finite multiset $S$ of positive integers, we wish to find a partially ordered set $P$ of minimal cardinality such that the multiset of cardinalities of all maximal chains in $P$ equals $S$. This paper establishes upper and lower bounds on the size of $P$: $\max(S) + \lceil \log_2 |S| \rceil <= |P| <= \max(S) + |S| - 1$, and both bounds are tight.
\end{abstract}

\maketitle

\section{Background}

A \emph{partial order} $\leq$ on a set $P$ is a binary relation that satisfies the following three conditions for all $x, y, z \in P$:
\begin{itemize}
    \item Reflexivity: $x \leq x$
    \item Antisymmetry: $x \leq y \land y \leq x \implies x = y$
    \item Transitivity: $x \leq y \land y \leq z \implies x \leq z$
\end{itemize}
A set equipped with a partial order is called a \emph{partially ordered set}, or \emph{poset} for short. As a convention, $x < y$ for $x, y \in P$ if $x \leq y$ and $x \neq y$. For any subset $S$ of $P$, the relation $\leq$ is still a valid partial order when it is restricted to the elements of $S$, and the poset $(S, \leq)$ is called a \emph{suborder} of $(P, \leq)$. \newline

Every poset $(P, \leq)$ has an associated covering relation, $\prec$. For $x, y \in P$, $x \prec y$ if $x < y$ and there is no $z \in P$ such that $x < z$ and $z < y$. When $P$ is finite, the partial order can be recovered from the covering relation: $x \leq y$ if there is sequence $\{a_i\}_{i=0}^n$ of members of $P$ such that $a_0 = x, a_n = y$, and $a_i \prec a_{i+1}$ for all $i < n$. When $P$ is finite, it can be visualized with a \emph{Hasse diagram}, which is drawn by arranging the elements of $P$ so that $y$ is above $x$ whenever $x < y$, and $x$ is connected to $y$ if $x \prec y$. \newline

A covering relation on a poset $(P, \leq)$ forms a simple directed graph on the members of $P$. Moreover, a simple directed graph is the covering relation associated with some partial order if and only if the graph is acyclic and every edge is the unique path connecting its endpoints (that is, if an edge connects a vertex $x$ to a vertex $y$, then there is no other path from $x$ to $y$). \newline

For a poset $(P, \leq)$, elements $x, y \in P$ are \emph{comparable} if $x \leq y$ or $y \leq x$. A set $C \subseteq P$ is a \emph{chain} if all the elements of $C$ are comparable to one another. A chain $C$ in $P$ is \emph{maximal} if no other chain in $P$ is a proper superset of $C$. \newline

For a finite poset $(P, \leq)$, chains and maximal chains can also be defined in terms of the covering relation. A nonempty finite set $C \subseteq P$ is a chain if and only if the elements of $C$ can be ordered in a sequence $\{a_i\}_{i=0}^n$ where $a_i \prec a_{i+1}$ for all $i < n$, and $C$ is maximal if and only if $a_0$ is a minimal element of $P$ and $a_n$ is a maximal element of $P$. In other words, a chain is a path in the directed graph associated with the covering relation, and a chain is maximal if the path cannot be extended by adding an element to either end.
\newline

\section{Counting Chains with Matrices} \label{CMat}

Let $(P, \leq)$ be a nonempty finite poset. Let $n = |P|$ and index the elements of $P$ with the nonnegative integers less than $n$, so $P = \{p_i\}_{i=0}^{n-1}$. The \emph{adjacency matrix} associated with the covering relation on $(P, \leq)$ is the $n$-by-$n$ matrix $A$ where $A_{ij} = 1$ if $p_i \prec p_j$ and $A_{ij} = 0$ otherwise. It is well known that one can efficiently calculate the number of paths between two vertices of a graph by taking powers of the corresponding adjacency matrix. In particular, for any nonnegative integer $k$, entry $(i, j)$ of $A^k$ is the number of chains $C \subseteq P$ such that $p_i$ is the minimum element of $C$, $p_j$ is the maximum element of $C$, and $|C| = k + 1$ ($A^0$ is the identity matrix by convention). By summing all the entries $(i, j)$ of $A^k$ where $p_i$ is a minimal element of $P$ and $p_j$ is a maximal element of $P$, one obtains the total number of maximal chains of $P$ that have cardinality $k+1$. It is worth noting that since the graph associated with $A$ is acyclic, $A^k$ is only nonzero for $k < n$. \newline

\section{Problem Statement}

Given a nonempty finite poset $(P, \leq)$, the previous section shows how one can obtain the multiset $S$ of cardinalities of maximal chains in $P$. We are interested in the opposite problem: we are given a nonempty finite multiset $S$ of positive integers, and we are looking for a poset $(P, \leq)$ such that the multiset of maximal chain cardinalities equals $S$; furthermore, we want to find a poset $(P, \leq)$ that minimizes $|P|$ while satisfying the constraint on its maximal chain cardinalities. \newline

\section{Trivial Bounds} \label{TrB}

Suppose $S$ is a nonempty finite multiset of positive integers. Let $m$ be the maximum element of $S$ and let $n$ be the cardinality of $S$ (counting multiplicities). If $(P, \leq)$ is a poset whose multiset of maximal chain cardinalities equals $S$, then $|P| \geq m$, since $P$ must contain a chain of size $m$. On the other hand, it is always possible to construct a poset $(P, \leq)$ where $|P| = m + n - 1$ and the multiset of maximal chain cardinalities equals $S$. \newline

Let $S' = S \setminus \{ m \}$ (so the multiplicity of $m$ in $S$ minus the multiplicity of $m$ in $S'$ equals one). Suppose $P$ has an element $x_i$ for each positive integer $i$ less that or equal to $m$, and suppose $P$ has an element $s_{jk}$ for each distinct element $j \in S'$ and each positive integer $k$ less that or equal to the multiplicity of $j$ in $S'$. So $|P| = m + n - 1$, and if the partial order on $P$ is defined so that $x_i < x_j$ and $x_i < s_{jk}$ when $i < j$, then the associated multiset of maximal chain cardinalities equals $S$. Figure \ref{FigUB} shows the Hasse diagram for the poset given by this construction if $S = \{ 2, 3, 3, 5, 5 \}$. \newline

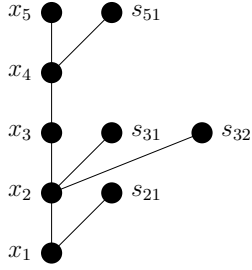
\begin{figure}[ht]
  \centering
  \scalebox{0.8}{
    \begin{tikzpicture}
      \draw (0,1) -- (0,5);

      \draw (0,1) -- (1,2);
      \draw (0,2) -- (1,3);
      \draw (0,2) -- (2.5,3);
      \draw (0,4) -- (1,5);

      \fill (0,1) circle (5pt) node[left=5pt] {$x_1$};
      \fill (0,2) circle (5pt) node[left=5pt] {$x_2$};
      \fill (0,3) circle (5pt) node[left=5pt] {$x_3$};
      \fill (0,4) circle (5pt) node[left=5pt] {$x_4$};
      \fill (0,5) circle (5pt) node[left=5pt] {$x_5$};

      \fill (1,2) circle (5pt) node[right=5pt] {$s_{21}$};
      \fill (1,3) circle (5pt) node[right=5pt] {$s_{31}$};
      \fill (2.5,3) circle (5pt) node[right=5pt] {$s_{32}$};
      \fill (1,5) circle (5pt) node[right=5pt] {$s_{51}$};
    \end{tikzpicture}
  }
  \caption{\label{FigUB}
    A poset with maximal chain cardinalities $\{ 2, 3, 3, 5, 5 \}$. \newline
  }
\end{figure}

\pagebreak

\section{The Lower Bound}

The following theorem shows that the lower bound from Section \ref{TrB} can be improved.

\begin{theorem} \label{LB}
Let $(P, \leq)$ be a finite poset and let $S$ be the multiset of all cardinalities of maximal chains in $P$. Let $m$ be the maximum element of $S$ and let $n = |S|$ (counting multiplicities). Then $|P| \geq m + \log_2(n)$.
\end{theorem}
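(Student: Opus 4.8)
The plan is to recast the bound as a bound on the number of maximal chains. Since $n = |S|$ is exactly the number of maximal chains of $P$ and $|P|$ is an integer, the inequality $|P| \ge m + \log_2 n$ is equivalent to $n \le 2^{\,|P| - m}$. Here is where the quantity $m$ enters: I would fix a chain $C \subseteq P$ with $|C| = m$ (such a chain exists because $\max S = m$, so some maximal chain has cardinality $m$) and let $E = P \setminus C$, so that $|E| = |P| - m$. It then suffices to produce an injection from the set of maximal chains of $P$ into the power set $\mathcal{P}(E)$, and the natural candidate is $\Phi(D) = D \cap E$.

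The substance of the argument is proving $\Phi$ injective, and I expect this — more precisely, seeing \emph{why} it is true — to be the main obstacle; a first instinct might be to induct on $|P| - m$, which seems to get complicated, or to worry that two distinct maximal chains could share the same trace on $E$. The resolution is the observation that a maximal chain is already determined by its trace on $E$, because its trace on $C$ is forced. Concretely I would establish, for every maximal chain $D$, the identity
\[
D \cap C = \{\, c \in C : c \text{ is comparable to every element of } D \cap E \,\}.
\]
Inclusion $\subseteq$ is immediate since $D$ is a chain containing $c$ together with all of $D \cap E$. For $\supseteq$, if $c \in C$ is comparable to every element of $D \cap E$, then because $C$ is a chain $c$ is also comparable to every element of $D \cap C$, hence to every element of $D = (D \cap C) \cup (D \cap E)$; thus $D \cup \{c\}$ is a chain, and maximality of $D$ forces $c \in D$. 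With this identity in hand, $D = (D \cap C) \cup (D \cap E)$ is visibly a function of $D \cap E$ alone, so $\Phi$ is injective.

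The remaining steps are routine. The degenerate cases need no separate treatment: when $D \cap E = \emptyset$ the identity reads $D = C$ (consistent with the fact that a chain of maximum size is itself a maximal chain and is the only maximal chain contained in it), and when $D \subseteq E$ it reads $D \cap C = \emptyset$; the displayed formula covers both. Injectivity of $\Phi$ gives $n \le |\mathcal{P}(E)| = 2^{\,|P| - m}$, and taking base-$2$ logarithms yields $|P| \ge m + \log_2 n$; since $|P| - m \in \mathbb{Z}$, this in fact gives $|P| \ge m + \lceil \log_2 n \rceil$, matching the abstract.
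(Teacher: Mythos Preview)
Your proof is correct and follows essentially the same approach as the paper: fix a longest chain, show that the map sending a maximal chain to its intersection with the complement is injective via the identity $D\cap C=\{c\in C:c\text{ is comparable to every element of }D\cap E\}$, and conclude $n\le 2^{|P|-m}$. The paper's argument is identical in structure (with notation $M,X,C$ in place of your $C,E,D$), proving the same identity by observing that the right-hand side together with $D\cap E$ forms a chain containing $D$.
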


\begin{proof}
Let $k = |P| - m$. Since $P$ must contain a maximal chain of cardinality $m$, the elements of $P$ can be partitioned into sets $M$ and $X$ such that $M$ is a maximal chain of $m$ elements and $X$ contains the remaining $k$ elements. Let $C$ be a maximal chain in $P$. Since $C \cap X$ is a subset of $X$, there are at most $2^k$ possible values of $C \cap X$. \newline

The critical idea is that we can determine which elements of $M$ are in $C$ if we only know which elements of $X$ are in $C$. By the pigeonhole principle, this will imply that there are at most $2^k$ possible maximal chains $C$ in $P$, which implies that there are at most $2^k$ different numbers in $S$. So $2^k\geq n$, or equivalently, $k \geq \log_2(n)$. \newline

Let $A = C \cap X$. If an element $y \in M$ is a member of $C$, then $y$ is comparable to every element of $A$. So if $B$ is the set of all $y \in M$ such that $y$ is comparable to all elements of $A$, then $B \supseteq C \cap M$. Furthermore, the set $A \cup B$ is a chain: every pair of elements in $A$ is comparable because $A \subseteq C$, every pair of elements in $B$ is comparable because $B \subseteq M$, and every element of $A$ is comparable to every element of $B$ by the definition of $B$. But if $A \cup B$ is a chain where $A \cup B \supseteq C$, then $A \cup B = C$ by the maximality of $C$. It follows that $C \cap M = B$.
\end{proof}

The following theorem shows that the lower bound given in Theorem \ref{LB} is sharp.
\begin{theorem} \label{Sums}
Let $k$ be a positive integer, and let $I$ be the set of all positive integers less than or equal to $k$. For each $i \in I$, let $a_i$ be a nonnegative integer. Let $A$ be the multiset $\{a_i: i \in I\}$. Let $\textup{sums}(A)$ be the multiset $\{\Sigma_{i \in J} a_i: J \subseteq I\}$ (so $|\textup{sums}(A)| = 2^k$). If $S$ is the multiset $\{k + x: x \in \textup{sums}(A)\}$, $n = |S| = 2^k$, and $m = \max(S) = k + \Sigma_{i=1}^k a_i$, then there is a poset of cardinality $m + \log_2(n) = 2k + \Sigma_{i=1}^k a_i$ whose multiset of maximal chain cardinalities equals $S$.
\end{theorem}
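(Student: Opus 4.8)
The plan is to realize $S$ as the multiset of maximal-chain cardinalities of an \emph{ordinal sum} of $k$ elementary gadget posets, one per index $i \in I$. For each $i$, let $P_i$ be the disjoint union of a one-element chain and a chain with $1+a_i$ elements (when $a_i = 0$ this is just a two-element antichain). Then $P_i$ is nonempty, $|P_i| = 2 + a_i$, and $P_i$ has exactly two maximal chains, whose cardinalities form the multiset $\{1,\,1+a_i\}$. Define $P = P_1 \oplus P_2 \oplus \cdots \oplus P_k$, meaning the underlying set is the disjoint union of the $P_i$, each $P_i$ retains its own order, and $x < y$ whenever $x \in P_i$, $y \in P_j$, and $i < j$. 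A routine verification of reflexivity, antisymmetry, and transitivity shows that $P$ is a poset, and $|P| = \sum_{i=1}^{k}(2 + a_i) = 2k + \sum_{i=1}^{k} a_i$, which is exactly $m + k = m + \log_2 n$ because $n = 2^k$.

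The core of the argument is the claim that the maximal chains of $P$ are precisely the sets $C_1 \cup C_2 \cup \cdots \cup C_k$ with each $C_i$ a maximal chain of $P_i$, and that the assignment $(C_1,\dots,C_k) \mapsto C_1 \cup \cdots \cup C_k$ is a bijection onto the maximal chains of $P$. I would first isolate two facts about $\oplus$: any two elements lying in distinct summands are comparable, and two elements of a common summand $P_i$ are comparable in $P$ if and only if they are comparable in $P_i$. Granting these, the ``contains'' direction is immediate: $C_1 \cup \cdots \cup C_k$ is a chain, and any element $z \in P_j$ that could be adjoined to it would make $C_j \cup \{z\}$ a strictly larger chain inside $P_j$, contradicting maximality of $C_j$. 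For the ``is contained in'' direction, given a maximal chain $C$ put $C_i = C \cap P_i$; each $C_i$ is a chain in $P_i$; each $C_i$ is nonempty, since otherwise adjoining to $C$ any maximal chain of the nonempty poset $P_j$ enlarges it; and each $C_i$ is maximal in $P_i$, since a witnessing $z \in P_i \setminus C_i$ would again enlarge $C$. Injectivity is clear because $C_i = (C_1 \cup \cdots \cup C_k) \cap P_i$, the summands being disjoint.

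It remains to do the cardinality bookkeeping. Because the decomposition is a bijection, the multiset of maximal-chain cardinalities of $P$ equals the multiset $\{\,|C_1| + \cdots + |C_k| : C_i \text{ a maximal chain of } P_i \,\}$ ranging over the $2^k$ tuples; that is, the iterated Minkowski sum (as multisets) of the $\{1,\,1+a_i\}$. Choosing one maximal chain from each $P_i$ is the same as choosing a subset $J \subseteq I$ — take the long chain of $P_i$ exactly when $i \in J$ — and this is a multiplicity-preserving bijection between tuples and subsets of $I$; the corresponding cardinality is $\sum_{i \in J}(1+a_i) + \sum_{i \notin J} 1 = k + \sum_{i \in J} a_i$. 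Hence the multiset of maximal-chain cardinalities of $P$ is $\{\,k + \sum_{i \in J} a_i : J \subseteq I\,\} = \{\,k + x : x \in \textup{sums}(A)\,\} = S$, completing the construction.

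The step I expect to require the most care is the multiset bookkeeping when some $a_i$ equal $0$: such a summand $P_i$ is a two-element antichain contributing the cardinality $1$ \emph{twice}, so it is essential that the decomposition lemma be phrased in terms of individual maximal chains rather than the set of distinct cardinalities, so that these coincidences are counted with the right multiplicity. By contrast, the partial-order axioms for the ordinal sum and both containments in the lemma are straightforward once the two comparability facts about $\oplus$ are stated.
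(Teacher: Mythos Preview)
Your proof is correct and follows essentially the same approach as the paper: both build $P$ as the ordinal sum of the gadgets $P_i$ consisting of a chain of length $a_i+1$ together with a single incomparable element, and read off the maximal-chain cardinalities as $k+\sum_{i\in J}a_i$. Your write-up is in fact more careful than the paper's, which simply asserts the conclusion after defining the ordinal sum; your explicit bijection between maximal chains of $P$ and tuples of maximal chains of the $P_i$, and your attention to multiplicities when some $a_i=0$, fill in exactly the details the paper leaves implicit.
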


\begin{proof}
We give a method to construct the required poset.

In \cite[Section 1.6]{Gr11}, Gr\"atzer defines the \emph{ordinal sum} as an associative binary operation on posets: for posets $(P_1, \leq_1)$ and $(P_2, \leq_2)$, the ordinal sum $(P_1, \leq_1) + (P_2, \leq_2)$ is a poset on $P_1 \cup P_2$ that preserves the order of elements within $P_1$ and $P_2$ and places all elements of $P_1$ below all elements of $P_2$.

For each $i \in I$, let $(P_i, \leq_i)$ be a poset containing a chain of size $a_i+1$ and a single additional element not comparable to any element of the chain. If $(P, \leq) = \Sigma_{i=1}^k (P_i, \leq_i)$, then the multiset of cardinalities of maximal chains in $P$ equals $S$, and $|P| = \Sigma_{i=1}^k |P_i| = \Sigma_{i=1}^k (a_i + 2) = m + \log_2(n)$.
\end{proof}

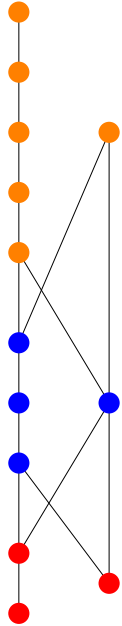
\begin{figure}[ht]
  \centering
  \scalebox{0.8}{
    \begin{tikzpicture}
      \draw (0,1) -- (0,11);
      \draw (1.5,1.5) -- (1.5,9);

      \draw (0,2) -- (1.5,4.5);
      \draw (1.5,1.5) -- (0,3.5);

      \draw (0,5.5) -- (1.5,9);
      \draw (1.5,4.5) -- (0,7);

      \fill[red] (0,1) circle (5pt);
      \fill[red] (0,2) circle (5pt);
      \fill[red] (1.5,1.5) circle (5pt);
      
      \fill[blue] (0,3.5) circle (5pt);
      \fill[blue] (0,4.5) circle (5pt);
      \fill[blue] (0,5.5) circle (5pt);
      \fill[blue] (1.5,4.5) circle (5pt);

      \fill[orange] (0,7) circle (5pt);
      \fill[orange] (0,8) circle (5pt);
      \fill[orange] (0,9) circle (5pt);
      \fill[orange] (0,10) circle (5pt);
      \fill[orange] (0,11) circle (5pt);
      \fill[orange] (1.5,9) circle (5pt);
    \end{tikzpicture}
  }
  \caption{\label{FigLB}
    A minimal poset whose multiset of maximal chain cardinalities is $\{3, 4, 5, 6, 7, 8, 9, 10\} = 3 + \textup{sums}(\{1, 2, 4\})$. Each color represents one of the posets in the ordinal sum. \newline
  }
\end{figure}

\section{The Upper Bound}

The purpose of this section is to show that the upper bound from Section \ref{TrB} is sharp. \newline

An element $x$ of a poset $(P, \leq)$ is a \emph{splitting element} of $P$ if there is more than one maximal chain in the suborder of all elements of $P$ less than or equal to $x$ and there is more than one maximal chain in the suborder of all elements of $P$ greater than or equal to $x$. Equivalently, for a finite poset $(P, \leq)$, an element $x \in P$ is splitting if the graph of the covering relation associated with $(P, \leq)$ contains more than one path from a minimal element of $P$ to $x$ and contains more than one path from $x$ to a maximal element of $P$. \newline

Suppose $(P, \leq)$ is a poset and $A$ is a subset of $P$. An \emph{in-edge} of $A$ is a pair of elements $x \in A$, $y \in P \setminus A$ where $y \prec x$. An \emph{out-edge} of $A$ is a pair of elements $x \in A$, $y \in P \setminus A$ where $x \prec y$. In other words, in-edges of $A$ are edges in the graph of the covering relation associated with $(P, \leq)$ that point from an element outside of $A$ to an element in $A$, and likewise, out-edges of $A$ are edges pointing from an element in $A$ to an element outside of $A$. \newline

Suppose that $(P, \leq)$ is a poset, $m$ is a positive integer, and $C = \{c_i\}_{i=1}^m$ is a chain in $P$ where $c_i \leq c_j$ when $i \leq j$. The \emph{index} of an in-edge of $C$ with endpoint $c_i$ is $i$, and the index of an out-edge of $C$ with endpoint $c_j$ is $j$. \newline

The following theorem shows that the upper bound from Section \ref{TrB} is sharp when the given maximal chain cardinalities are sparse.
\begin{theorem} \label{UB}
Let $S$ be a nonempty finite multiset of positive integers, let $n = |S|$, and let $m = \max(S)$. Suppose that $S$ has no element with multiplicity greater than one and that for all $a, b, c \in S$ with $a < b < c$, $(m-a) \geq (m-b) + (m-c) + n-3$. Then if $(P, \leq)$ is a finite poset where the multiset of all cardinalities of maximal chains in $P$ equals $S$, $|P| \geq m + n - 1$.
\end{theorem}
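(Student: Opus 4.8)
The plan is to injectively assign to each maximal chain of $P$ other than the (necessarily unique) longest one a distinct element of $P\setminus M$; this gives $n-1\le |P|-m$ at once.

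\emph{Setup and elementary facts.} Since $S$ has no repeated value, $P$ has a \emph{unique} maximal chain $M=\{c_1\prec\cdots\prec c_m\}$ of cardinality $m$, and $P$ has exactly $n$ maximal chains in all; write $X=P\setminus M$. For $n\le 3$ the claim is immediate from Theorem~\ref{LB}: it gives $|P|\ge m+\log_2 n$, and since $|P|$ and $m$ are integers this forces $|P|\ge m+\lceil\log_2 n\rceil=m+n-1$. So I assume $n\ge 4$. I record two easy facts. (a) An element $x\in X$ is covered by at most one element of $M$ and covers at most one element of $M$: if $x\prec c_i$ and $x\prec c_j$ with $i<j$ then $c_i$ lies strictly between $x$ and $c_j$. (b) Let $D$ (resp.\ $U$) be the set of $x\in P$ for which $\{y\in P:y\le x\}$ (resp.\ $\{y:y\ge x\}$) has a unique maximal chain. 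If $x\in D$, every maximal chain of $P$ through $x$ has the same set of elements $\le x$, namely the unique maximal chain of the down-set of $x$; dually for $U$.

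\emph{The engine: rerouting plus sparsity.} Suppose we can produce four pairwise distinct maximal chains $C_1,\dots,C_4$ with $|C_1|+|C_4|=|C_2|+|C_3|$. Ordering the four cardinalities as $\sigma_1<\sigma_2<\sigma_3<\sigma_4$, the only way four distinct numbers split into two pairs of equal sum is $\{\sigma_1,\sigma_4\}$ against $\{\sigma_2,\sigma_3\}$, so $\sigma_1+\sigma_4=\sigma_2+\sigma_3$; feeding $\sigma_1<\sigma_2<\sigma_3$ into the hypothesis yields $(m-\sigma_1)\ge(m-\sigma_2)+(m-\sigma_3)+n-3$, i.e.\ $\sigma_4\ge m+n-3$, contradicting $\sigma_4\le m$ as $n\ge 4$. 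Such quadruples arise by cutting maximal chains apart and regluing the pieces: a maximal chain that leaves $M$, returns, leaves again, and returns (two excursions) peels into two shorter maximal chains which together with $M$ obey the identity; and an element $z$ admitting two internally comparable maximal routes both below and above it (a splitting element) produces such a quadruple from the four bottom–top combinations. Using this device I would establish: (i) $P$ has no splitting element, so $D\cup U=P$; (ii) each maximal chain $C\ne M$ agrees with $M$ outside a single maximal run of consecutive elements of $C$ lying in $X$ (its \emph{excursion}); and (iii) the excursion of each $C\ne M$ is contained entirely in $D$ or entirely in $U$.

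\emph{The injection.} Granting (i)--(iii): for a maximal chain $C\ne M$ set $\phi(C)=\max(C\cap X)$ if its excursion lies in $D$, and $\phi(C)=\min(C\cap X)$ otherwise; in every case $\phi(C)\in X$. Suppose $\phi(C)=\phi(C')=y$. If $y$ came from the first rule then $y\in D$, so by (b) the chains $C,C'$ have the same elements $<y$; and because $y=\max(C\cap X)=\max(C'\cap X)$, the part of each chain $\ge y$ is $y$ followed by a terminal segment of $M$ starting at the unique element of $M$ covering $y$ (fact (a)), hence is the same for both, so $C=C'$. The second rule is symmetric via $U$. The two rules cannot output the same $y$ for distinct chains, for then $y\in D\cap U$, forcing a unique maximal chain through $y$ and $C=C'$ anyway. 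Thus $\phi$ embeds the $n-1$ maximal chains other than $M$ into $X$, and $|P|=m+|X|\ge m+n-1$.

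\emph{Expected main obstacle.} Item (iii) is the crux: from a maximal chain whose excursion meets both $D$ and $U$ one must extract an explicit rerouted quadruple satisfying the length identity with all four chains distinct, which appears to need a careful choice of where to split the excursion together with a separate treatment of the case in which two excursions share an element of $X$. Establishing (i) and (ii) by the same device should be comparatively routine.
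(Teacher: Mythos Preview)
Your framework matches the paper's: reduce to $n\ge4$, show $P$ has no splitting element (your (i)), deduce that each maximal chain $C\ne M$ makes a single excursion into $X$ (your (ii)), and then try to inject the $n-1$ such chains into $X$. Your proofs of (i) and (ii) via the four-chain sum identity are correct, and granting (iii) your map $\phi$ is indeed injective.

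The gap is (iii), and your proposed engine does not obviously close it. The engine yields a contradiction only when you can exhibit four \emph{distinct} maximal chains satisfying $|C_1|+|C_4|=|C_2|+|C_3|$, but a failure of (iii) does not hand you such a quadruple. Concretely: if the excursion $x_1\prec\cdots\prec x_p$ of some $C$ has $x_1\in D\setminus U$ and $x_p\in U\setminus D$, you get a second maximal chain $C'$ through $x_1$ branching upward and a second maximal chain $C''$ through $x_p$ branching downward, but $|C|,|C'|,|C''|$ obey no linear relation and there is no natural fourth chain---the upward branch off $x_1$ and the downward branch into $x_p$ need not meet each other, nor relate to $M$ in any usable way. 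Localizing to the last $x_q\in D$ and first $x_{q+1}\in U$ gives a single cover $x_q\prec x_{q+1}$ together with extra elements $w\succ x_q$ and $v\prec x_{q+1}$, but $v$ and $w$ are unrelated and again no sum identity materializes. Your own remark that one must ``extract an explicit rerouted quadruple'' with ``a separate treatment'' of shared excursion elements is exactly the missing argument, and it is not routine.

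The paper does not prove anything like (iii). Instead it argues by contradiction, assuming $|X|\le n-2$, and labels each $C\ne M$ by its pair $(\text{out-edge index},\text{in-edge index})$. It first uses the sparsity hypothesis together with $|X|\le n-2$ to show these labels are distinct; it then injects into $X$ only those labels that are extremal in one coordinate (its sets $L_O,L_I$); and finally it shows---by a quantitative inequality chase using the sparsity hypothesis directly, not merely through the four-chain trick---that a label outside $L_O\cup L_I$ would already force $|X|\ge n-1$. So precisely at the step analogous to your (iii) the paper abandons the pure rerouting engine and does real counting with the contradiction hypothesis $|X|\le n-2$ in hand. If your (iii) is provable at all, its proof almost certainly needs the same ingredients rather than another application of the engine.
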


\begin{proof}
If $n \leq 3$, then $m + n + 1 = m + \lceil \log_2(n) \rceil$, and the result follows from Theorem \ref{LB}. So we may assume that $n \geq 4$. \newline

Assume for the sake of contradiction that $P$ has a splitting element, $x$. Then there are distinct maximal chains of sizes $e$ and $f$ in the suborder of elements below $x$, and there are distinct maximal chains of sizes $g$ and $h$ in the suborder of elements above $x$. So $P$ has maximal chains of sizes $e+g-1$, $e+h-1$, $f+g-1$, and $f+h-1$; since $(e+g-1) + (f+h-1) = (e+h-1) + (f+g-1)$, this implies that $S$ contains four distinct elements $a$, $b$, $c$, and $d$ with $a+d=b+c$. Assume without loss of generality that $a$ is smaller than $b$, $c$, and $d$. Then $(m-a) = (m-b) + (m-c) - (m-d) < (m-b) + (m-c) + 1$, which contradicts the condition of the theorem when $n \geq 4$. \newline

Let $M$ be the maximal chain in $P$ of size $M$, and let $X = P \setminus M$. Suppose that $M = \{t_i\}_{i=1}^m$ where $t_i \leq t_j$ when $i \leq j$. If $(x, t_i)$ is an in-edge of $M$, $(t_j, y)$ is an out-edge of $M$, and $i \leq j$, then for every integer $k$ with $i \leq k \leq j$, $t_k$ is a splitting element of $P$. Since $P$ has no splitting elements, the index of every in-edge of $M$ must be strictly greater than the index of every out-edge of $M$. Moreover, every maximal chain in $P$ includes at most one in-edge of $M$ and at most one out-edge of $M$. \newline

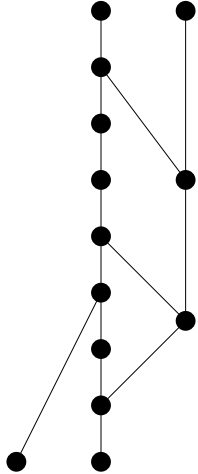
\begin{figure}[ht]
  \centering
  \scalebox{0.75}{
    \begin{tikzpicture}
      \draw (0,1) -- (0,9);
      \draw (-1.5,1) -- (0,4);
      \draw (0,2) -- (1.5,3.5);
      \draw (1.5,3.5) -- (0,5);
      \draw (1.5,3.5) -- (1.5,9);
      \draw (1.5,6) -- (0,8);

      \fill (0,1) circle (5pt);
      \fill (0,2) circle (5pt);
      \fill (0,3) circle (5pt);
      \fill (0,4) circle (5pt);
      \fill (0,5) circle (5pt);
      \fill (0,6) circle (5pt);
      \fill (0,7) circle (5pt);
      \fill (0,8) circle (5pt);
      \fill (0,9) circle (5pt);

      \fill (-1.5,1) circle (5pt);
      \fill (1.5,3.5) circle (5pt);
      \fill (1.5,6) circle (5pt);
      \fill (1.5,9) circle (5pt);
    \end{tikzpicture}
  }
  \caption{\label{FigSpl}
    A poset with no splitting elements. \newline
  }
\end{figure}

Label each maximal chain $C \neq M$ of $P$ with a pair of integers $(i, j)$, where $i$ is the index of the out-edge of $M$ whose endpoints are in $C$ or $i = 0$ if no such out-edge exists, and $j$ is the index of the in-edge of $M$ whose endpoints are in $C$ or $j = m+1$ if no such in-edge exists. By the conclusion of the last paragraph, every label $(i, j)$ has $i < j$. \newline

If $P$ beats the trivial construction and $|P| \leq m + n - 2$, then the maximal chain labels are unique. Suppose for the sake of contradiction that $P$ has two distinct maximal chains $C_1$ and $C_2$ that are not equal to $M$ and are both labeled $(i, j)$. Let $a = |C_1|$, let $b = |C_2|$, and assume without loss of generality that $a < b$. By the condition of the theorem, $(m-a) \geq (m-b) + (m-m) + n-3$, so $b-a \geq n-3$. Since $C_1 \cap M = C_2 \cap M$, $b - a = |C_2 \cap X| - |C_1 \cap X|$. $C_1$ is a maximal chain not equal to $M$, so $C_1 \not \subseteq M$ and $|C_1 \cap X| \geq 1$. Then since $|X| \leq n-2$, we must have $|C_2 \cap X| = |X| = n-2$ and $|C_1 \cap X| = 1$ to satisfy the inequality $|C_2 \cap X| - |C_1 \cap X| \geq n-3$. But then $X \subseteq C_2$, which implies that $C_1 \subsetneq C_2$, contradicting the maximality of $C_1$. \newline

Let $L_O$ be the set of all labels $(i, j)$ of maximal chains in $P$ such that $(a, c)$ is not the label of any maximal chain in $P$ for any $c < b$. Let $L_I$ be the set of all labels $(i, j)$ of maximal chains in $P$ such that $(d, b)$ is not the label of any maximal chain in $P$ for any $d > a$. Define the function $f: L_O \mapsto X$ so that for each label $(i, j) \in L_O$, $f(i,j)$ is the unique $x \in X$ such that $x$ is in the maximal chain labeled $(i, j)$ and $t_a \prec x$. Similarly, define the function $g: L_I \mapsto X$ so that for each label $(i, j) \in L_I$, $f(i,j)$ is the unique $x \in X$ such that $x$ is in the maximal chain labeled $(i, j)$ and $x \prec t_b$. $f$ and $g$ are injections, since no two labels in $L_O$ can have the same out-edge index and no two labels in $L_I$ can have the same in-edge index. So $|L_O| \leq |X|$ and $|L_I| \leq X$. \newline

Let $L = L_O \cup L_I$. Define $h: L \mapsto X$ so that for each label $(i, j) \in L$, $h(i,j)=f(i,j)$ if $(i,j) \in L_O$ and $h(i,j)=g(i,j)$ if $(i,j) \in L_I \setminus L_O$. $h$ is an injection. Assume for the sake of contradiction that $h$ is not an injection, and let $(a, b) \in L_O$ and $(c, d) \in L_I$ be labels such that $f(a, b) = h(a, b) = h(c, d) = g(c, d)$. Let $x = f(a, b) = g(c, d)$, so $t_a \prec x$ and $x \prec t_d$. Since $x < t_b$, it follows that $t_d \leq t_b$, and since $t_c < x$, it follows that $t_c \leq t_a$. But one can obtain a maximal chain labeled $(a, d)$ by taking the union of all elements less than or equal to $x$ in the maximal chain labeled $(a, b)$ with the set of all elements of $M$ above $x$. If $(a, d)$ is the label of a maximal chain in $P$ and $(a, b)$ is a label in $L_O$ with $d \leq b$, then by the definition of $L_O$, $b$ must equal $d$. Similarly, if $(a, d)$ is the label of a maximal chain in $P$ and $(c, d)$ is a label in $L_I$ with $c \leq a$, then by the definition of $L_I$, $c$ must equal $a$. Then $(a, b) = (c, d)$, contradicting the assumption that $(c, d) \not \in L_O$. So $|L| \leq |X|.$ \newline

The next paragraph will show that if $P$ beats the trivial construction and $|X| \leq n-2$, then every label of a maximal chain in $P$ other than $M$ appears in $L$. Since the label for each maximal chain is unique, it will follow that there are at most $|L| \leq |X| \leq n-2$ maximal chains in $P$ other than $M$, which shows that it is impossible for $P$ to have one maximal chain for each element of $S$. \newline

Assume for the sake of contradiction that there exists a maximal chain labeled $(a, d)$ in $P$ where $(a, d) \not \in L$. Then there is a label $(a, c) \in L_O$ and a label $(b, d) \in L_I$ where $a < b < c < d$ ($b < c$ because every out-edge of $M$ has a smaller index than every in-edge of $M$). Let $C_1$ be the maximal chain labeled $(a, c)$, let $C_2$ be the maximal chain labeled $(b, d)$, and let $C_3$ be the maximal chain labeled $(a, d)$. Let $q = |C_1|$, let $r = |C_2|$, and let $s = |C_3|$. Let $x = |X \cap C_1 \setminus C_2|$, let $y = |X \cap C_1 \cap C_2|$, and let $z = |X \cap C_2 \setminus C_1|$. Then $m-q=(c-a-1)-(x+y)$ and $m-r=(d-b-1)-(y+z)$, so $(m-q)+(m-r)=(c-a-1)-(x+y)+(d-b-1)-(y+z)=(d-a)-(x+y+z)+(c-b-y-2)$. If $y > 0$, then $C_1 \cap C_2 \cap X$ is a chain of $y$ elements in $X$ strictly between $t_b$ and $t_c$, and because $M$ is the longest maximal chain in $P$, it follows that $c-b-1 \geq y+1$; if $y=0$, then at least $c-b-1 \geq 0 = y$. So $(m-q)+(m-r) \geq (d-a)-(x+y+z)-1$. Let $w = |X \cap C_3 \setminus (C_1 \cup C_2)|$ and let $v = |X \cap C_3 \cap (C_1 \cup C_2)|$. $(m-s)=(d-a-1)-(w+v)=(d-a)-w-v-1$, so $(m-s)-(m-q)-(m-r) \leq (d-a)-w-v-1-(d-a)+(x+y+z)+1=(x+y+z+w)-2w-v \leq |X|-2w-v$. By condition of the theorem, however, $(m-s)-(m-q)-(m-r) \geq n-3$, so $|X| \geq 2w+v+n-3$. But for $P$ to beat the trivial construction, $|X|$ cannot be any larger than $n-2$, so $2w+v \leq 1$. However, $|C_3 \cap X| \geq 1$, so $w+v \geq 1$. But it is not possible to let $w=0$ and $v=1$, because no element of $X \cap (C_1 \cup C_2)$ can simultaneously cover $t_a$ and be covered by $t_d$.
\end{proof}

\section{Validation}
We wish to show that the problem of determining whether there exists a poset of cardinality at most $t$ with a prescribed multiset $S$ of maximal chain cardinalities is in the complexity class NP. \newline

Let $S$ is a nonempty finite multiset of positive integers. Let $\textup{size}(S)$ be the sum of all the digits of each number in $S$ written in binary, counting multiplicities. That is, $\textup{size}(S) = \Sigma_{x \in S} (\lfloor \log_2(x) \rfloor + 1)$. Let $n = |S|$ and let $m = \max(S)$. $n + \lfloor \log_2(m) \rfloor \leq \textup{size}(S) \leq n (\lfloor \log_2(m) \rfloor + 1)$. \newline

Let $t$ be a positive integer less than or equal to $m + n - 1$. Suppose that there exists a poset of cardinality at most $t$ whose multiset of maximal chain cardinalities equals $S$. The goal is to show that there is a method to verify that such a poset exists in an amount of time bounded by a polynomial in $\textup{size}(S)$. \newline

Naively, one might simply construct the poset $(P, \leq)$ satisfying the necessary conditions and verify that $(P, \leq)$ gives the desired set of maximal chain cardinalities using the method described in Section \ref{CMat}. The problem is that $|P|$ can be exponentially larger than $\textup{size}(S)$. But this approach can be modified to fit our needs. \newline

Let $M$ be a maximal chain in $P$ of size $m$ and let $X = P \setminus M$. For each $x \in X$, there is at most one $y \in M$ such that $y \prec x$, and there is at most one $z \in M$ such that $x \prec z$. Let $M'$ be the subset of $M$ that contains the minimum and maximum elements of $M$ as well as every element of $M$ that either covers some element of $X$ or is covered by some element of $X$. So $|M'| \leq 2n$. Let $P' = M' \cup X$. Construct an acyclic simple directed graph $G$ on the elements of $P'$ and assign a weight to each of the edges of $G$ as follows: for any $x,y \in P'$ where $x \prec y$, add an edge from $x$ to $y$ with weight 1, and for any $x, y \in M'$ where $x \not \prec y$, add an edge from $x$ to $y$ with weight equal to $1+|\{z \in M: x < z \land z < y\}|$. Then for every maximal chain $C$ in $P$, there is a corresponding path in $G$ that starts from a minimal element and ends at maximal element where the sum of all weights of edges in the path in $G$ equals the length of $C$ (the \emph{length} of a chain $C$ is $|C|-1$). \newline

The number of vertices in $G$ is no greater than $3n-1$, and the weight of each edge is no greater than $m$. Suppose that the vertices of $G$ (i.e. the elements of $P'$) are indexed and labeled $\{ v_i \}_{i=0}^{|P'|-1}$. Let $A$ be the adjacency matrix for $G$, but instead of placing a $1$ on entries corresponding to edges of $G$, place the monomial $x^w$ on entry $(i, j)$ when there is an edge of weight $w$ connecting vertex $v_i$ to vertex $v_j$, and place $0$ on entry $(i, j)$ when there is not an edge connecting vertex $v_i$ to vertex $v_j$. Let $L = \Sigma_{k=1}^{3n-1} A^k$. Entry $(i, j)$ of $L$ is a polynomial in $x$ where the coefficient of the term $x^l$ is the total number of paths from vertex $v_i$ to vertex $v_j$ where the sum of the weights of all included edges equals $l$. By summing all the polynomials on entries $(i, j)$ of $L$ where the vertex $v_i$ of $G$ is a minimal element of $P'$ and the vertex $v_j$ of $G$ is a maximal element of $P'$, one obtains a polynomial where the coefficient of the term $x^l$ is equal to the total number of maximal chains in $P$ of length $l$ (i.e. of cardinality $l+1$). \newline

\section{Future Problems}

1. Is there an efficient algorithm that takes a nonempty finite multiset $S$ of positive integers and determines the minimum possible cardinality of a poset whose multiset of maximal chain cardinalities equals $S$, or is the problem NP-complete? \newline

2. How do the upper and lower bounds change if we change the minimization criteria on the poset? In particular, what happens if, instead of aiming to minimize the number of elements in the poset, we aim to minimize the number of edges in the graph of the associated covering relation?  \newline

\section*{Acknowledgements}

Special thanks to David Wang for his contribution in the discussion of this problem, and special thanks to Kira Adaricheva for introducing the problem and providing support in the process of writing this paper. \newline


\begin{thebibliography}{99}

\bibitem{Gr11}  G. Gr\"atzer, \emph{Lattice Theory: Foundation}, Birkh\"auser, 2011.

\end{thebibliography}
\end{document}